\newtheorem{theorem}{Theorem}[section]
\newtheorem{lemma}[theorem]{Lemma}
\newtheorem{proposition}{Proposition}[section]
\newtheorem{corollary}{Corollary}[section]
\theoremstyle{definition}
\theoremstyle{remark}
\newtheorem{remark}[theorem]{Remark}
\numberwithin{equation}{section}
\begin{document}

\title[Upper central series for the group of unitriangular automorphisms...]
{Upper central series for the group of unitriangular automorphisms of a free associative algebra}

\author{Valeriy G. Bardakov}
\address{Sobolev Institute of Mathematics, Novosibirsk State University, Novosibirsk 630090, Russia}
\address{and}
 \address{Laboratory of Quantum Topology, Chelyabinsk State University, Brat'ev Kashirinykh street 129,
 Chelyabinsk 454001, Russia}
\email{bardakov@math.nsc.ru}
\thanks{The first author thank the organizers of the Conference ``Groups, Geometry and Dynamics"
(Almora, India, 2012) for this beautiful and interesting Conference.}

\author{Mikhail V. Neshchadim}
\address{Sobolev Institute of Mathematics, Novosibirsk State University, Novosibirsk 630090, Russia}
\email{neshch@math.nsc.ru}
\thanks{The  authors was supported by Federal Target Grant ``Scientific
and educational personnel of innovation Russia'' for 2009-2013
(government contract No. 02.740.11.0429). Also, this research was
supported by  the Indo-Russian DST-RFBR project
grant DST/INT/RFBR/P-137 (No.~13-01-92697)}


\subjclass[2010]{Primary 16W20; Secondary
20E15, 20F14.}


\keywords{Free associative algebra, group of unitriangular automorphisms, upper central series}

\begin{abstract}
We study some subgroups of the group of unitriangular automorphisms $U_n$
of a free associative algebra  over a field of characteristic zero. We find the center
of  $U_n$ and describe the hypercenters of $U_2$ and $U_3$. In particular, we
prove that the upper central series for $U_2$ has infinite length.
As consequence,  we prove that the
groups $U_n$ are non-linear for all $n \geq 2$.
\end{abstract}

\maketitle








\section{Introduction}

In this paper we consider a free associative algebra $A_n = K
\langle x_1, x_2,$ $\ldots,$ $x_n \rangle$ over a field $K$ of
characteristic zero. We assume that $A_n$ has unity. The group of $K$-automorphisms of this
algebra, i.e. automorphisms that fix elements of $K$ is
denoted by $\mathrm{Aut} \, A_n$.
The group of tame automorphisms $\mathrm{TAut} \, A_n$ of $A_n$
is generated by the group of affine automorphism
$\mathrm{Aff} \, A_n$
and the group of unitraingular automorphisms $U_n=U(A_n)$.
From the result of Umirbaev \cite{U}  follows that $\mathrm{Aut} \, A_3 \not= \mathrm{TAut} \, A_3$.

A question about linearity (i.e. about a faithful representation
by finite dimensional matrices over some field) of $\mathrm{TAut}
\, A_n$ was studied in the paper of Roman'kov, Chirkov, Shevelin
\cite{R}, where it was proved that for $n \geq 4$ these groups are
not linear. Sosnovskii \cite{S} proved that for  $n \geq 3$ the
group $\mathrm{Aut} \, P_n$ is not linear, where
$P_n = K [x_1, x_2,$ $\ldots,$ $x_n ]$ is the polynomial algebra over a field $K$.
His result follows
from description of the upper central series for the group of
unitriangular automorphisms $U(P_n)$ of $P_n$.

The structure of the present paper is the following. In Section 2 we introduce  some notations, recall
some facts on the automorphism group of free associative algebra and its subgroups.
In the previous article \cite{B} we
found the lower central series and the series of the commutator subgroups for $U_n$.
In Section 3 we study the upper central series for $U_2$ and prove that the length of this series is infinite.
Prove that $U_2$ is non-linear. In Section 4 we study the upper central series for $U_3$ and describe the
hypercentral subgroups in the terms of some algebras. In Section 5 we find the center of $U_n$ for $n \geq 4$.
Also, in Sections 4 and 5 we formulate some hypotheses and questions
that are connected with the
theory of non-commutative invariants in free associative algebra under the action of some subgroups of $U_n$.

\section{Some previous results and remark}

Let us recall definitions of some automorphisms and subgroups of $\mathrm{Aut} \, A_n$.

For any index  $i\in \left\{1, \ldots, n \right\}$, a constant
$\alpha \in K^* = K\backslash \{0\}$ and a polynomial $f = f(x_1,
\ldots , \widehat{x_i}, \ldots ,x_n) \in A_n$ (where the symbol
$\widehat{x_i}$ denotes that $x_i$ is not included in $f$) {\it the
elementary automorphism } $\sigma (i, \alpha, f)$ is an
automorphism in $\mathrm{Aut} \, A_n$ that acts on the
variables  $x_1, \ldots ,x_n$ by the rule:
$$
\sigma (i, \alpha, f) :
 \left\{
\begin{array}{lcl}
x_i \longmapsto \alpha \, x_i + f, \,\, &    \\
x_j \longmapsto x_j, \,\,           & \mbox{if} & \,\, j \neq i. \\
\end{array}
\right.
$$
The group of tame automorphisms $\mathrm{TAut} \, A_n$ is
generated by all elementary automorphisms.

The group of affine automorphisms $\mathrm{Aff} \, A_n$ is a
subgroup of $\mathrm{TAut} \, A_n$ that consists of automorphisms
$$
x_i\longmapsto a_{i1} x_1+ \ldots + a_{in} x_n + b_i,\,\, i=1, \ldots , n,
$$
where $a_{ij}$, $b_i\in K$, $i,j=1, \ldots ,n$, and the matrix
$(a_{ij})$ is a non-degenerate one.  The group of affine
automorphisms is the semidirect product $K^n \leftthreetimes
\mathrm{GL}_n (K)$ and, in particular, embeds in the group of
matrices $\mathrm{GL}_{n+1} (K)$.

The group of triangular automorphisms  $T_n = T(A_n)$ of algebra
$A_n$  is generated by automorphisms
$$
x_i\longmapsto \alpha_i x_i+f_i(x_{i+1}, \ldots , x_n),\,\, i = 1,\ldots , n,
$$
where $\alpha_i \in K^*$, $f_i\in A_n$ and $f_n\in K$. If all
$\alpha_i =1$ then this automorphism is called {\it the
unitriangular automorphism}. The group of unitriangular
automorphisms is denoted by $U_n = U(A_n)$.

In the group $U_n$ let us define a subgroup $G_i$, $i = 1, 2, \ldots, n$ which is generated by automorphisms
$$
 \sigma (i, 1, f),~~ \, \mbox{where} \,  f = f(x_{i+1}, \ldots , x_n) \in A_n.
$$
Note that the subgroup $G_i$ is abelian and isomorphic to an
additive subgroup of algebra $A_n$ that is generated by $x_{i+1},
\ldots , x_n$, $i = 1, \ldots, n-1$, and the subgroup  $G_n $ is
isomorphic to the additive group of the field $K$.

 {\it The lower central series}
of a group $G$ is the series
$$
G = \gamma_1 G \geq \gamma_2 G \geq \ldots,
$$
where $\gamma_{i+1} G = [\gamma_i G, G],$ $i = 1, 2, \ldots$. {\it The series of the commutator subgroups} of a
group $G$ is the series
$$
G = G^{(0)} \geq G^{(1)} \geq G^{(2)} \geq \ldots,
$$
where $G^{(i+1)} = [G^{(i)}, G^{(i)}],$ $i = 0, 1, \ldots$. Here for subsets $H$, $K$ of $G$, $[H, K]$
denotes the subgroup of $G$ generated by the commutators $[h, k] = h^{-1} k^{-1} h k$ for $h \in H$ and
$k \in K$.

Recall that the $k$-th hypercenter $Z_k = Z_k(G)$ of the upper central series of $G$ for the
non-limited ordinal $k$ is defined by the rule
$$
Z_k / Z_{k-1} = Z(G / Z_{k-1})
$$
or equivalently,
$$
Z_k = \{ g \in G ~|~[g, h] \in Z_{k-1}~ \mbox{for all} ~h \in G \},
$$
and $Z_1(G) = Z(G)$ is the center of $G$.
If $\alpha$ is a limit ordinal, then define
$$
Z_{\alpha} = \bigcup_{\beta < \alpha} Z_{\beta}.
$$

It was proved in \cite{B} that $U_n$ is a semidirect product of abelian groups:
$$
U_n = (\ldots(G_1 \leftthreetimes G_2)\leftthreetimes  \ldots ) \leftthreetimes G_n,
$$
and  the lower central
series and the series of commutator subgroups of $U_n$ satisfy the following two properties respectvely:

1)  For $n \geq 2$
$$
\gamma_2 U_n = \gamma_3 U_n = \ldots.
$$
In particular, for $n \geq 2$ the group $U_n$ is not nilpotent.

2)  The group $U_n$ is solvable of degree $n$ and the
corresponding commutator subgroups have the form:
$$
 \begin{array}{l}
  U_n= (\ldots(G_1 \leftthreetimes G_2)\leftthreetimes  \ldots ) \leftthreetimes G_{n},\\
  U_n^{(1)}= (\ldots(G_1 \leftthreetimes G_2)\leftthreetimes  \ldots ) \leftthreetimes G_{n-1},\\
  .........................................\\
  U_n^{(n-1)}= G_1,\\
  U_n^{(n)}= 1.
 \end{array}
$$

Yu. V. Sosnovskiy \cite{S} found the upper central series for the unitriangular group $U(P_n)$ of the
polynomial algebra $P_n$. (Note, that he considered polynomials without free terms.)
He proved that for $n \geq 3$ the group $U(P_n)$ has the  upper central series
of length $((n-1)(n-2)/2) \omega + 1$ for  any field $K$, where $\omega$ is the first limit ordinal.
If $\mathrm{char} \, K = 0$ then the
hypercenters of $U(P_4)$ have the form\\

$Z_{k} = \{ (x_1 + f_1(x_3, x_4), x_2, x_3) ~|~
\mathrm{deg}_{x_3} f_1(x_3, x_4) \leq k -1 \}$,\\

$Z_{\omega} = \{ (x_1 + f_1(x_3, x_4), x_2, x_3)  \}$,\\

$Z_{\omega + k} = \{ (x_1 + f_1(x_2, x_3, x_4), x_2, x_3) ~|~
\mathrm{deg}_{x_2} f_1(x_2, x_3, x_4) \leq k \}$,\\

$Z_{2 \omega} = \{ (x_1 + f_1(x_2, x_3, x_4), x_2, x_3)  \}$,\\

$Z_{2\omega + k} = \{  (x_1 + f_1(x_2, x_3, x_4), x_2 + f_2(x_3, x_4), x_3) ~|~
\mathrm{deg}_{x_3} f_2(x_3, x_4) \leq k - 1 \}$,\\

$Z_{3 \omega} = \{  (x_1 + f_1(x_2, x_3, x_4), x_2 + f_2(x_3, x_4), x_3)  \}$,\\

$Z_{3 \omega + 1} = U(P_4)$,\\
where $k = 1, 2, \ldots$ runs over the set of natural numbers and
$f_1, f_2, f_3$ are arbitrary polynomials in $P_n$ which depend on the corresponding variables.

\section{Unitriangular group $U_2$}

Let $A_2 = K \langle x, y \rangle$ be the free associative   algebra over a field $K$  of
characteristic zero with the variables $x$ and $y$. Then
$$
U_2=
\left\{
 \varphi= \left( x + f(y), y + b  \right) ~|~ f(y) \in K\langle  y \rangle,\,\,b \in K \right\}
$$
is the group of unitriangular automorphisms of  $A_2$.

It is not difficult to check the following Lemma

\begin{lemma}\label{l:form}
1) If $\varphi = \left( x + f(y), y + b  \right) \in U_2$,
then its inverse is equal  to
$$
 \varphi^{-1}= \left( x - f(y - b), y - b  \right);
$$

2) if
$ \varphi= \left( x + f(y), y + b \right)$ and $\psi= \left( x + h(y), y + c \right)\in U_2$,
then the following formulas hold:

-- the formula of conjugation
$$
  \psi^{-1}\varphi \psi = \left( x + h(y) - h(y + b) + f(y + c), y + b  \right),
$$

-- the formula of commutation
$$
  \varphi^{-1}\psi^{-1}\varphi \psi = \left( x + h(y) - h(y + b) + f(y + c) - f(y), y  \right).
$$

\end{lemma}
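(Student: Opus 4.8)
The plan is to establish all four identities by a direct computation on the generators, using only that an element of $U_2$ is determined by the images of $x$ and $y$ and that every $\varphi\in U_2$ is a $K$-algebra homomorphism, so that $\varphi\bigl(h(y)\bigr)=h\bigl(\varphi(y)\bigr)$ for each $h\in K\langle y\rangle$. Throughout I compose automorphisms so that in a product $\varphi\psi$ the factor $\varphi$ acts first, that is $(\varphi\psi)(u)=\psi\bigl(\varphi(u)\bigr)$; this is the convention under which the stated formulas hold, the opposite one yielding the mirror identities with the shifts of the arguments of $f$ and $h$ reversed.

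For part~1, since $U_2$ is a group and $\varphi\in U_2$, its inverse again lies in $U_2$ and hence has the shape $\varphi^{-1}=\bigl(x+g(y),\,y+d\bigr)$ for some $g\in K\langle y\rangle$ and $d\in K$. Imposing $\varphi\varphi^{-1}=\mathrm{id}$ and evaluating on $y$ gives $(y+d)+b=y$, so $d=-b$; evaluating on $x$ and using the homomorphism property gives $\bigl(x+g(y)\bigr)+f(y+d)=x$, whence $g(y)=-f(y-b)$. This is the asserted formula for $\varphi^{-1}$, and being a right inverse in a group it is automatically the two-sided inverse.

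For part~2, I would first compute $\psi^{-1}\varphi\psi$ by applying $\psi^{-1}$, then $\varphi$, then $\psi$ to each generator, invoking part~1 for $\psi^{-1}$ and pushing each map through the polynomials $f$ and $h$. On $y$ this gives $y\mapsto y-c\mapsto(y+b)-c\mapsto y+b$, and on $x$ it gives
\[
x\ \longmapsto\ x-h(y-c)\ \longmapsto\ \bigl(x+f(y)\bigr)-h(y+b-c)\ \longmapsto\ \bigl(x+h(y)\bigr)+f(y+c)-h(y+b),
\]
which is precisely the conjugation formula. The commutation formula then follows by writing $\varphi^{-1}\psi^{-1}\varphi\psi=\varphi^{-1}\cdot(\psi^{-1}\varphi\psi)$ and composing $\varphi^{-1}=\bigl(x-f(y-b),\,y-b\bigr)$ with the already-computed $\psi^{-1}\varphi\psi=\bigl(x+p(y),\,y+b\bigr)$, where $p(y)=h(y)-h(y+b)+f(y+c)$: this composite fixes $y$, and on $x$ it sends $x\mapsto x-f(y-b)\mapsto\bigl(x+p(y)\bigr)-f(y)$, i.e. to $x+h(y)-h(y+b)+f(y+c)-f(y)$.

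There is no conceptual difficulty here; the only points requiring care --- and the only places where a sign or shift error can creep in --- are the bookkeeping of the argument shifts inside $f$ and $h$ when a map of the form $y\mapsto y+c$ is pushed through a polynomial in $y$, and fixing once and for all the order in which the factors of a product of automorphisms are applied.
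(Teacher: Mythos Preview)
Your computation is correct and is exactly the routine verification the paper has in mind; the paper itself states the lemma with the remark that it ``is not difficult to check'' and gives no proof. Your explicit identification of the composition convention $(\varphi\psi)(u)=\psi(\varphi(u))$ is a useful addition, since it is precisely this convention that produces the stated shifts.
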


Using this Lemma we can describe the center of $U_2$.

\begin{lemma} \label{l:c}
The center of $U_2$ has the form
$Z(U_2) = \left\{ \varphi = \left( x + a, y  \right) ~|~ a \in K \right\}$.
\end{lemma}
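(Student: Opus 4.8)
The plan is to use the commutation formula from Lemma \ref{l:form} directly. An automorphism $\varphi = (x + f(y), y + b)$ lies in $Z(U_2)$ precisely when $\varphi^{-1}\psi^{-1}\varphi\psi$ is the identity for every $\psi = (x + h(y), y + c) \in U_2$; by the commutation formula this means
$$
h(y) - h(y + b) + f(y + c) - f(y) = 0 \qquad \text{for all } h(y) \in K\langle y\rangle,\ c \in K.
$$
First I would extract the constraint on $b$: taking $h(y) = y$ (and any $c$) forces $-b + f(y+c) - f(y) = 0$; comparing this with the case $h = 0$ gives $b = 0$. So a central element has the form $\varphi = (x + f(y), y)$, and the remaining condition is $f(y + c) = f(y)$ for all $c \in K$.

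Next I would argue that $f(y + c) = f(y)$ for all $c \in K$ forces $f$ to be a constant. Since $\mathrm{char}\, K = 0$ and $K$ is infinite, write $f(y) = \sum_{k=0}^{m} a_k y^k$; then $f(y+c) - f(y)$ is a polynomial in $y$ whose leading term in degree $m-1$ is $m\, a_m c\, y^{m-1}$. If $m \geq 1$, choosing $c \neq 0$ makes this nonzero (using $m \neq 0$ in $K$), a contradiction. Hence $m = 0$, i.e. $f(y) = a \in K$, so $\varphi = (x + a, y)$.

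Conversely, every $\varphi = (x + a, y)$ with $a \in K$ is central: plugging $f(y) = a$, $b = 0$ into the commutation formula gives $h(y) - h(y) + a - a = 0$, so $\varphi$ commutes with every $\psi \in U_2$. This proves $Z(U_2) = \{(x + a, y) \mid a \in K\}$. The only genuine point requiring care is the infiniteness of $K$ (guaranteed by characteristic zero) in the step that rules out non-constant $f$; everything else is a direct substitution into Lemma \ref{l:form}.
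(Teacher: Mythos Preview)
Your proof is correct and follows essentially the same route as the paper: both arguments plug into the formulas of Lemma~\ref{l:form} (the paper uses the conjugation formula, you use the equivalent commutation formula) and specialize $h$ to extract the two constraints. The only cosmetic difference is the order---the paper first takes $h=0$ to force $f\in K$ and then lets $h$ vary to force $b=0$, while you do the reverse.
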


\begin{proof}
If $\varphi = \left( x + a, y  \right)$, then from the formula of conjugation (see Lemma~\ref{l:form}) follows
 that $\varphi \in Z(U_2)$. To prove the inverse inclusion
suppose
$ \varphi= \left( x+f(y), y+b  \right) \in Z(U_2)$.
Using the formula of conjugation we get
$$
  \varphi= \left( x + f(y), y + b  \right) = \psi^{-1} \varphi \psi = \left( x + h(y) - h(y + b) + f(y + c)
  y + b  \right),
$$
for any automorphism  $\psi = \left( x + h(y), y + c  \right) \in U_2$,
i.e.
$  f(y) = h(y) - h(y + b) + f(y + c)$.
Taking $h = 0$ we get $f(y) = f(y + c)$ for every $c \in K$.
Hence, $f(y) = a \in K$. We have only relation
$0 = h(y) - h(y + b)$. Since $h(y)$ is arbitrary, it follows that $b=0$.

\end{proof}

\begin{lemma}\label{l:com}
The following properties hold true in $U_2$.

1)
$[U_2, U_2] = \left\{\varphi = \left( x + f(y), y  \right) ~|~ f(y) \in K \langle  y \rangle \right\}$.

2) If $\varphi = \left( x + f(y), y  \right)$, where $f(y) \in K \langle  y \rangle \setminus K$, then
$$
C_{U_2}(\varphi) = \left\{\left( x + h(y), y  \right) ~|~ h(y)\in K \langle y \rangle \right\},
$$
where $C_{U_2}(\varphi)$ is the centralizer of $\varphi$ in $U_2$, i.e.
$C_{U_2}(\varphi) = \{ \psi \in U_2 ~|~ \psi \varphi = \varphi \psi\}$.

3) If $\varphi= \left( x,y+b  \right)$, $b \in K$, then
$C_{U_2}(\varphi) = \left\{ \left( x + a, y + c  \right) ~|~ a, c \in K \right\}$.

\end{lemma}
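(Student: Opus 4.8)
The whole lemma is an application of the commutation formula in Lemma~\ref{l:form}, which says that for $\varphi = (x + f(y), y+b)$ and $\psi = (x + h(y), y+c)$ one has
$$
\varphi^{-1}\psi^{-1}\varphi\psi = \bigl( x + h(y) - h(y+b) + f(y+c) - f(y),\, y \bigr).
$$

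For part 1) the plan is: first, reading off the formula above shows every commutator has the form $(x + g(y), y)$ with $g \in K\langle y\rangle$, so $[U_2,U_2]$ is contained in the displayed set. For the reverse inclusion I would produce an arbitrary $(x + g(y), y)$ as a single commutator: take $\varphi = (x, y+1)$ (so $f = 0$, $b = 1$) and $\psi = (x + h(y), y)$ (so $c = 0$); then the commutator is $(x + h(y) - h(y+1),\, y)$. It remains to check that the map $h(y) \mapsto h(y) - h(y+1)$ is surjective on $K\langle y\rangle = K[y]$; this is an easy degree argument (it lowers degree by exactly one and is $K$-linear, so one builds a preimage of $y^k$ inductively, using $\operatorname{char} K = 0$ to divide by the leading coefficient $-(k+1)$). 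Alternatively, since $[U_2,U_2]$ is a normal subgroup it suffices to hit the generators $(x + y^k, y)$.

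For parts 2) and 3) I would again invoke the commutation formula. In part 2), $\psi = (x + h(y), y+c)$ centralizes $\varphi = (x + f(y), y)$ iff $h(y) - h(y+0) + f(y+c) - f(y) = 0$, i.e. $f(y+c) = f(y)$. Since $f \notin K$, this forces $c = 0$ (a nonconstant polynomial in $K[y]$ is not invariant under a nontrivial shift, as $f(y+c)-f(y)$ has degree $\deg f - 1 \geq 0$), and then the $h$-condition is vacuous; hence the centralizer is exactly $\{(x + h(y), y) \mid h \in K\langle y\rangle\}$, which is the subgroup $[U_2,U_2]$ of part 1) — note it is abelian, so it does centralize $\varphi$, giving the reverse inclusion. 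In part 3), $\varphi = (x, y+b)$ so $f = 0$, $b$ arbitrary; $\psi = (x + h(y), y+c)$ centralizes it iff $h(y) - h(y+b) = 0$. If $b = 0$ this is automatic and the centralizer is all of $U_2$ — so implicitly $b \neq 0$ here, and then $h(y) = h(y+b)$ forces $h$ to be constant (same shift-invariance argument). Thus the centralizer is $\{(x + a, y + c) \mid a, c \in K\}$ as claimed.

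The only genuine content is the surjectivity statement in part 1) and the shift-invariance rigidity used repeatedly in parts 2) and 3); both reduce to the elementary fact that over a field of characteristic zero the difference operator $g(y) \mapsto g(y) - g(y+b)$ on $K[y]$ is injective on nonconstants and surjective onto $K[y]$ when $b \neq 0$. I expect no real obstacle beyond bookkeeping; the one point to state carefully is that in part 3) the hypothesis is only interesting for $b \neq 0$ (the case $b=0$ being the center, already handled in Lemma~\ref{l:c}).
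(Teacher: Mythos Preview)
Your proof is correct and follows essentially the same approach as the paper: both arguments rest entirely on Lemma~\ref{l:form} and the elementary behavior of the shift operator $g(y)\mapsto g(y+b)-g(y)$ on $K[y]$. The only cosmetic difference is that the paper invokes the conjugation formula rather than the commutation formula in parts 2) and 3), and in part 1) simply asserts that every element of $K\langle y\rangle$ has the form $r(y+d)-r(y)$ where you spell out the degree induction; your observation that part 3) tacitly requires $b\neq 0$ is a point the paper glosses over.
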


\begin{proof} 1) Let
$\varphi = \left( x + f(y), y  + b \right)$, $\psi = \left( x + h(y), y + c  \right) \in U_2$.
By the formula of commutation
$$
  \varphi^{-1} \psi^{-1} \varphi \psi = \left( x + h(y) - h(y + b) + f(y + c) - f(y), y  \right).
$$
It is easy to see that any element of $K \langle  y \rangle$ can be represented in a form
$r(y + d) - r(y)$ for some  $r(y) \in K \langle  y \rangle$ and $d\in K$.
Hence,
$$
K \langle  y \rangle = \{ h(y) - h(y + b) + f(y + c) - f(y)~|~h, f \in K \langle  y \rangle,~~b, c \in K\}
$$
and 1) is true.

2) Let
$ \varphi = \left( x+f(y), y \right)$, $f(y) \in K \langle y \rangle \setminus K$ and
$ \psi = \left( x + h(y), y + c  \right)$ be an arbitrary element of $C_{U_2}(\varphi)$.
Using the formula of conjugation we get
$$
  \varphi= \left( x + f(y), y  \right) = \psi^{-1} \varphi \psi = \left( x + f(y + c), y  \right).
$$
Hence $c = 0$.

3) Let
$ \varphi = \left( x, y + b \right)$ and
$ \psi = \left( x + h(y), y + c  \right)$ be an arbitrary element of $C_{U_2}(\varphi)$.
Using the formula of conjugation we get
$$
  \varphi = \left( x, y + b  \right) = \psi^{-1} \varphi \psi = \left( x + h(y) - h(y + b), y + b  \right).
$$
Hence $h(y)=a \in K$.

\end{proof}

\begin{lemma}\label{l:co}
If $s$ is a non-negative integer, then the $(s+1)th$ hypercenter of $U_2$ has the form
$$
 Z_{s+1}(U_2) = \left\{\varphi = \left( x+f(y),y  \right)
~|~ f(y) \in K \langle  y \rangle,\,\, \mathrm{deg} f(y) \leq s \right\}.
$$
\end{lemma}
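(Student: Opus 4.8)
The plan is to proceed by induction on the non-negative integer $s$, the base case $s = 0$ being exactly the description of the centre $Z_1(U_2) = Z(U_2)$ furnished by Lemma~\ref{l:c} (here $\deg f \le 0$ just means $f \in K$). So assume $s \ge 1$ and
$$Z_s(U_2) = \{\,(x + g(y),\, y) ~\mid~ g(y) \in K\langle y\rangle,\ \deg g \le s - 1\,\},$$
and recall the working description $Z_{s+1}(U_2) = \{\varphi \in U_2 ~\mid~ [\varphi,\psi] \in Z_s(U_2)\ \text{for every}\ \psi \in U_2\}$. Since $A_2$ has a single variable $y$, we have $K\langle y\rangle = K[y]$, so degrees behave as usual; everything below rests on the commutation formula of Lemma~\ref{l:form}: for $\varphi = (x + f(y), y + b)$ and $\psi = (x + h(y), y + c)$,
$$[\varphi,\psi] = \bigl(x + h(y) - h(y + b) + f(y + c) - f(y),\ y\bigr).$$

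For the inclusion ``$\supseteq$'', take $\varphi = (x + f(y), y)$ with $\deg f \le s$. Putting $b = 0$ in the formula, $[\varphi,\psi] = (x + f(y + c) - f(y), y)$ for every $\psi$; the degree-$(\deg f)$ terms of $f(y+c)$ and $f(y)$ coincide and cancel, so $\deg\bigl(f(y+c) - f(y)\bigr) \le \deg f - 1 \le s - 1$, whence $[\varphi,\psi] \in Z_s(U_2)$ by the inductive hypothesis. Thus $\varphi \in Z_{s+1}(U_2)$.

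For ``$\subseteq$'', let $\varphi = (x + f(y), y + b) \in Z_{s+1}(U_2)$. First I would show $b = 0$: taking $\psi = (x + y^N, y)$ with $N$ coprime to $\mathrm{char}\,K$ (or any $N$ if $\mathrm{char}\,K = 0$) gives $[\varphi,\psi] = (x + y^N - (y + b)^N, y)$, and for $b \ne 0$ the right-hand component has degree $N - 1$; letting $N \to \infty$ this is incompatible with $[\varphi,\psi] \in Z_s(U_2)$, where the first component has degree at most $s - 1$. Hence $b = 0$, so $[\varphi,\psi] = (x + f(y + c) - f(y), y)$ for all $c \in K$, and membership in $Z_s(U_2)$ forces $\deg\bigl(f(y + c) - f(y)\bigr) \le s - 1$ for every $c$. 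This is the one point where $\mathrm{char}\,K = 0$ is genuinely used: if $m = \deg f$ and $a_m \ne 0$ is the leading coefficient of $f$, the coefficient of $y^{m-1}$ in $f(y + c) - f(y)$ equals $m\,a_m\,c$, which is nonzero for $c \ne 0$ because $m$ is invertible in $K$; hence $\deg\bigl(f(y+c) - f(y)\bigr) = m - 1$, and the bound gives $m \le s$. Therefore $\varphi = (x + f(y), y)$ with $\deg f \le s$, completing the induction.

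The argument is entirely computational; the only subtle point — and the only place the hypothesis $\mathrm{char}\,K = 0$ is essential — is the final degree count, ensuring that replacing $f(y)$ by $f(y+c) - f(y)$ lowers the degree by exactly one and not more (which is precisely what fails for $f = y^p$ in characteristic $p$, reflecting a longer upper central series in that case).
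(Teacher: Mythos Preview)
Your proof is correct and follows essentially the same route as the paper: induction on $s$ with base case Lemma~\ref{l:c}, then the commutation formula of Lemma~\ref{l:form} to force $b=0$ via an unbounded-degree argument and to bound $\deg f$ via $\deg\bigl(f(y+c)-f(y)\bigr)$. You are simply more explicit than the paper in two places --- you spell out the ``$\supseteq$'' direction (which the paper calls evident) and you isolate exactly where $\mathrm{char}\,K=0$ enters --- but the underlying argument is the same.
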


\begin{proof}
If $s = 0$, then the assertion follows from Lemma \ref{l:c}. Suppose that for
$s + 1$ the assertion holds true. We now prove it for  $s + 2$.
Let
$$
  \varphi = \left( x + f(y), y + b  \right)\in Z_{s+2}(U_2).
$$
Using the formula of commutation (see Lemma \ref{l:form})
for $\varphi$ and $\psi= \left( x + h(y), y + c  \right)$
we get
$$
  \varphi^{-1}\psi^{-1}\varphi \psi = \left( x + h(y) - h(y + b) + f(y + c) - f(y), y  \right).
$$
If $b \neq 0$ and since $h(y)$ is an arbitrary  polynomial of $K \langle  y \rangle$, then
$h(y) - h(y + b) + f(y + c) - f(y)$ represents arbitrary element of
$K \langle y \rangle$.
But it is not possible since for any automorphism $\varphi = (x + f(y), y) \in Z_{s+1}$ the degree
of $f(y)$ is not bigger than $s$.
Hence $b = 0$. Since $\mathrm{deg} f(y+c)-f(y) \leq s$ and  $c$ is an arbitrary element of $K$,
we have $\mathrm{deg} f(y) \leq s + 1$. So the inclusion from left to right is proved. The inverse inclusion is
evident.

\end{proof}

Hence, from Lemma \ref{l:co}
$$
 Z_{\omega}(U_2)=\left\{\varphi= \left( x + f(y), y  \right) ~|~ f(y) \in K\langle y \rangle \right\},
$$
and using Lemma \ref{l:com} we have $Z_{\omega}(U_2)=[U_2, U_2]$.
Therefore
$$
 Z_{\omega+1}(U_2)=U_2.
$$

\begin{corollary}
The group $U_2$ is not linear.
\end{corollary}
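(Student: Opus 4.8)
The plan is to combine two facts now in hand. First, $U_2$ is \emph{hypercentral}: by Lemma~\ref{l:co} and the computation following it, $Z_{\omega+1}(U_2)=U_2$, so the upper central series exhausts the group. Second, $U_2$ is \emph{not nilpotent}: by the results of \cite{B} recalled in Section~2 one has $\gamma_2U_2=\gamma_3U_2=\cdots=[U_2,U_2]$, which by Lemma~\ref{l:com} is the nontrivial subgroup $\{(x+f(y),y)\ |\ f\in K\langle y\rangle\}$; equivalently, by Lemma~\ref{l:co} the hypercenters form a strictly increasing chain $Z_1(U_2)<Z_2(U_2)<\cdots$ with $Z_n(U_2)\neq U_2$ for every finite $n$. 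Granting the classical fact that a hypercentral linear group must be nilpotent, a hypothetical faithful embedding $U_2\hookrightarrow\mathrm{GL}_m(F)$ over any field $F$ would make $U_2$ nilpotent, a contradiction; hence $U_2$ is not linear.

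For completeness I would also indicate why a hypercentral linear group $G\leq\mathrm{GL}_m(F)$ is nilpotent. A hypercentral group is locally nilpotent (a standard fact), so it suffices to show that a locally nilpotent linear group is nilpotent. Let $\overline G$ be the Zariski closure of $G$, an algebraic group over $F$; a standard argument furnishes a finitely generated subgroup $H\leq G$ with $\overline H=\overline G$, and $H$, being finitely generated and contained in the locally nilpotent group $G$, is nilpotent of some class $c$. Since the $(c+1)$-fold commutator map is a morphism of varieties and $H$ is Zariski dense in $\overline H$, the identity $\gamma_{c+1}=1$ holds on all of $\overline H=\overline G$; thus $\overline G$, and a fortiori its subgroup $G$, is nilpotent of class at most $c$. (Alternatively, one may invoke directly Mal'cev's theorem that a locally nilpotent linear group is nilpotent.)

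The argument presents no real obstacle once the upper central series of $U_2$ has been computed; the only external input is the nilpotency of locally nilpotent (equivalently, hypercentral) linear groups. The single point requiring attention is that the hypothetical faithful representation of $U_2$ may be defined over a field of arbitrary characteristic, so one must use the version of that fact valid over every field rather than only in characteristic zero.
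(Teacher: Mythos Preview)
Your argument is correct. Both you and the paper deduce non-linearity from the computation $Z_{\omega+1}(U_2)=U_2$ together with a structural theorem about linear groups, but the external input differs. The paper invokes Gruenberg's result \cite{G} that a \emph{torsion-free} linear group has upper central series of finite length; since the length here is $\omega+1$, $U_2$ cannot be linear. This requires (implicitly) the observation that $U_2$ is torsion-free, which holds because $\mathrm{char}\,K=0$. Your route instead uses that a hypercentral group is locally nilpotent and then Mal'cev's theorem that a locally nilpotent linear group is nilpotent; since $U_2$ is hypercentral but not nilpotent, it is not linear. Your version sidesteps the torsion-freeness check and works uniformly over any ground field for the hypothetical representation, at the cost of invoking local nilpotence of hypercentral groups and the Zariski-density argument (or Mal'cev) you sketch. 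Both approaches sit in the same circle of ideas, and either is perfectly acceptable here.
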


\begin{proof}
We know (see, for example \cite{G}) that if a linear group does not
contain torsion, then the length of the upper central series of this group is finite.
But we proved that  the length of the upper central series for
$U_2$ is equal to $\omega + 1$. Hence, the group $U_2$ is not linear.
\end{proof}

Since $U_2$ is a subgroup of $\mathrm{Aut} \, A_2$,  we have proven
that this group is not linear too. Using the fact that if $P_2$ is a polynomial algebra with unit over
$K$, then $\mathrm{Aut} \, A_2 = \mathrm{Aut} \, P_2$
(see, for example \cite{C}).

\begin{corollary}
Let $n \geq 2$. Then the groups $\mathrm{Aut} \, A_n$ and $\mathrm{Aut} \, P_n$ are
not linear.
\end{corollary}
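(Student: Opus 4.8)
The plan is to reduce the general case $n\geq 2$ to the case $n=2$ that has just been settled, using two standard facts: that any subgroup of a linear group is again linear, and that $\mathrm{Aut}\,A_2$ (respectively $\mathrm{Aut}\,P_2$) embeds into $\mathrm{Aut}\,A_n$ (respectively $\mathrm{Aut}\,P_n$) for every $n\geq 2$. Indeed, given an automorphism $\varphi$ of $A_2=K\langle x_1,x_2\rangle$, extend it to $A_n=K\langle x_1,\ldots,x_n\rangle$ by declaring $\widetilde\varphi(x_i)=x_i$ for $i=3,\ldots,n$; one checks immediately that $\widetilde\varphi$ is a $K$-automorphism of $A_n$, that $\varphi\mapsto\widetilde\varphi$ is a homomorphism, and that it is injective since $\widetilde\varphi$ restricted to the subalgebra $K\langle x_1,x_2\rangle$ recovers $\varphi$. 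Thus $\mathrm{Aut}\,A_2$ is isomorphic to a subgroup of $\mathrm{Aut}\,A_n$. The identical construction works for the polynomial algebra and produces an embedding of $\mathrm{Aut}\,P_2$ into $\mathrm{Aut}\,P_n$.

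First I would invoke the previous corollary together with the remark following it: $U_2$ is not linear, hence neither is $\mathrm{Aut}\,A_2$, and since $\mathrm{Aut}\,A_2=\mathrm{Aut}\,P_2$ (by \cite{C}, as already noted), $\mathrm{Aut}\,P_2$ is not linear either. Next, suppose for contradiction that $\mathrm{Aut}\,A_n$ were linear for some $n\geq 2$. Then its subgroup (isomorphic to) $\mathrm{Aut}\,A_2$ would be linear, contradicting the previous step; hence $\mathrm{Aut}\,A_n$ is not linear. The same argument with the embedding $\mathrm{Aut}\,P_2\hookrightarrow\mathrm{Aut}\,P_n$ shows $\mathrm{Aut}\,P_n$ is not linear.

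There is essentially no hard step here; the only things to be careful about are that the extension-by-identity map is genuinely well defined as a map into $\mathrm{Aut}\,A_n$ (it is, because fixing the extra generators does not interfere with $\varphi$ being invertible on the remaining ones) and that it is a group monomorphism rather than merely a monoid map (injectivity follows from restriction back to $K\langle x_1,x_2\rangle$). One could also phrase the reduction entirely inside the unitriangular subgroups, noting that $U_2$ embeds in $U_n$ in the same fashion, so that non-linearity of $U_n$ — and a fortiori of $\mathrm{Aut}\,A_n$ — already follows; I would mention this as an alternative but keep the statement at the level of $\mathrm{Aut}\,A_n$ and $\mathrm{Aut}\,P_n$ as in the corollary. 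The main conceptual obstacle, namely producing a non-linear subgroup with a transfinite upper central series, has been overcome in Lemma~\ref{l:co} and the corollary preceding this one; what remains is purely the permanence of non-linearity under passing to overgroups that contain $\mathrm{Aut}\,A_2$ as a subgroup.
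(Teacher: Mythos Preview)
Your proposal is correct and follows essentially the same argument as the paper: the paper's proof is the single line that $\mathrm{Aut}\,A_2 \leq \mathrm{Aut}\,A_n$ and $\mathrm{Aut}\,P_2 \leq \mathrm{Aut}\,P_n$ for all $n\geq 2$, together with the preceding observation that $\mathrm{Aut}\,A_2=\mathrm{Aut}\,P_2$ is non-linear because it contains $U_2$. You have simply spelled out the extension-by-identity embedding that the paper leaves implicit.
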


It  follows from the fact that $\mathrm{Aut} \, A_2 \leq \mathrm{Aut} \, A_n$
and $\mathrm{Aut} \, P_2 \leq \mathrm{Aut} \, P_n$ for all $n \geq 2$.

\begin{remark}
In \cite{S} the author considered the polynomials without free terms and proved that $\mathrm{Aut} \, P_3$
is not linear. Using his method it is not difficult to prove that if $P_2$ contains free terms,  then
$\mathrm{Aut} \, P_2$ is not linear over any  field of arbitrary characteristic.
\end{remark}

\section{Unitriangular group $U_3$}

The group $U_3$ is equal to
$$
U_3 = \{ (x_1 + f_1, x_2 + f_2, x_3 + f_3) ~|~f_1 = f_1(x_2, x_3) \in K \langle x_2, x_3 \rangle,
$$
$$
f_2 = f_2(x_3) \in K \langle x_3 \rangle, f_3 \in K \}.
$$

Define an algebra $S$ as subalgebra of $K \langle x_2, x_3 \rangle$
$$
S = \{ f(x_2, x_3)\in K \langle x_2, x_3 \rangle ~|~ f(x_2 + g(x_3), x_3 + h) = f (x_2, x_3)~
$$
$$
\mbox{for any}~g(x_3) \in K \langle x_3 \rangle,~~h \in K \}
$$
Hence, $S$ is a subalgebra of fixed elements under the action of the group
$$
\{ (x_2 + g, x_3 + h) ~|~g = g(x_3) \in K \langle x_3 \rangle, h \in K \}
$$
which is isomorphic to $U_2$.
The set $S$ is a subalgebra of $A_3$.

Define a set of commutators
$$
c_1 = [x_2, x_3],~~c_{k+1} = [c_k, x_3],~~k = 1, 2, \ldots,
$$
where $[a, b] = a b - b a$ is the ring commutator. Using induction on $k$, it is not difficult to check
the following result.

\begin{lemma}
The commutators $c_k$, $k = 1, 2, \ldots,$ lie in $S$.
\end{lemma}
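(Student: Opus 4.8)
The plan is to prove the statement by induction on $k$, showing that each $c_k$ is literally fixed by every substitution occurring in the definition of $S$. For $g = g(x_3) \in K\langle x_3 \rangle$ and $h \in K$, let $\phi_{g,h}$ denote the endomorphism of $K\langle x_2, x_3 \rangle$ sending $x_2 \mapsto x_2 + g$ and $x_3 \mapsto x_3 + h$; since $\phi_{g,h}$ is an algebra homomorphism it commutes with the ring commutator, i.e. $\phi_{g,h}([a,b]) = [\phi_{g,h}(a), \phi_{g,h}(b)]$. So the whole argument reduces to checking that $\phi_{g,h}(c_k) = c_k$ for all $g$ and $h$.

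For the base case $k=1$ I would expand $\phi_{g,h}(c_1) = [x_2 + g,\, x_3 + h]$ using bilinearity of the bracket into the four terms $[x_2, x_3] + [x_2, h] + [g, x_3] + [g, h]$. The terms $[x_2, h]$ and $[g, h]$ vanish because $h \in K$ is central in $A_3$, and $[g, x_3] = 0$ because $g$ and $x_3$ both lie in the commutative subalgebra $K\langle x_3 \rangle$. Hence $\phi_{g,h}(c_1) = [x_2, x_3] = c_1$, so $c_1 \in S$.

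For the inductive step, assuming $\phi_{g,h}(c_k) = c_k$ for all $g, h$, I would compute
\[
\phi_{g,h}(c_{k+1}) = \phi_{g,h}([c_k, x_3]) = [\phi_{g,h}(c_k),\, x_3 + h] = [c_k, x_3] + [c_k, h] = [c_k, x_3] = c_{k+1},
\]
using the induction hypothesis together with the centrality of $h$ (which kills $[c_k, h]$). This gives $c_{k+1} \in S$ and closes the induction.

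I do not expect a genuine obstacle here. The only two points that need care are: (i) that each defining substitution of $S$ is an algebra endomorphism, so that it may be pushed through the ring commutator; and (ii) that $g(x_3)$ commutes with $x_3$, which is exactly what makes the $g$-dependent contribution to $c_1$ disappear and lets the induction run using only the fact that $h$ is a scalar.
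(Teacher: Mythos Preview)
Your proof is correct and follows exactly the approach the paper indicates: the paper merely states that the lemma is ``not difficult to check'' by induction on $k$, and your argument is the natural execution of that induction, using that the substitution $\phi_{g,h}$ is an algebra homomorphism (hence commutes with the bracket) together with the vanishing of $[g,x_3]$ and of brackets with the scalar $h$.
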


{\bf Hypothesis 1.} The algebra $K \langle c_1, c_2, \ldots \rangle$ is equal to $S$.

\medskip

Note that the elements $c_1, c_2, \ldots$ are free generators of $K \langle c_1, c_2, \ldots \rangle$
(see \cite[p.~62]{Co}).

\medskip

\begin{theorem} The center $Z(U_3)$ of the group $U_3$ is equal to
$$
Z(U_3) = \{ (x_1 + f_1, x_2, x_3) ~|~f_1 = f_1(x_2, x_3) \in S \}.
$$
\end{theorem}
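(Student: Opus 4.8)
The plan is to reduce everything to the substitution rule for composing automorphisms in $U_3$. Writing automorphisms as triples and composing them by substitution (with $\varphi$ acting first), for $\varphi = (x_1 + f_1, x_2 + f_2, x_3 + f_3)$ and $\psi = (x_1 + h_1, x_2 + h_2, x_3 + h_3)$ one gets
$$
\varphi\psi = \bigl(x_1 + h_1 + f_1(x_2 + h_2, x_3 + h_3),\ x_2 + h_2 + f_2(x_3 + h_3),\ x_3 + h_3 + f_3\bigr),
$$
and the analogous expression for $\psi\varphi$ (swap $f\leftrightarrow h$ throughout). This is the $U_3$-version of the commutation formula of Lemma~\ref{l:form}, and its derivation is a routine substitution; I would record it first as a preliminary computation. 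Then $\varphi \in Z(U_3)$ is equivalent to the equality of the three coordinates of $\varphi\psi$ and $\psi\varphi$ for every $\psi \in U_3$.

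For the inclusion $\supseteq$ I would take $\varphi = (x_1 + f_1, x_2, x_3)$ with $f_1 \in S$. Then $f_2 = f_3 = 0$ makes the $x_2$- and $x_3$-coordinates of $\varphi\psi$ and $\psi\varphi$ agree automatically, while the $x_1$-coordinates agree if and only if $f_1(x_2 + h_2(x_3), x_3 + h_3) = f_1(x_2, x_3)$ for all $h_2 \in K\langle x_3\rangle$ and $h_3 \in K$. Since the maps $(x_2, x_3)\mapsto(x_2 + h_2(x_3), x_3 + h_3)$ are exactly the elements of the $U_2$-action that fix $S$ pointwise, this holds, so $\varphi$ is central.

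For $\subseteq$, let $\varphi = (x_1 + f_1, x_2 + f_2, x_3 + f_3) \in Z(U_3)$. Comparing $x_2$-coordinates in $\varphi\psi = \psi\varphi$ for arbitrary $\psi$ gives $h_2(x_3) + f_2(x_3 + h_3) = f_2(x_3) + h_2(x_3 + f_3)$; putting $h_3 = 0$ forces $h_2(x_3) = h_2(x_3 + f_3)$ for all $h_2$, whence $f_3 = 0$ (take $h_2 = x_3$), and then letting $h_3$ vary forces $f_2$ to be a constant $a \in K$. (Equivalently: the image of $\varphi$ in $U_3/G_1 \cong U_2$ must be central, so Lemma~\ref{l:c} gives $f_3 = 0$ and $f_2 = a$.) Next, comparing $x_1$-coordinates for $\psi \in G_1$, i.e. $h_2 = h_3 = 0$, yields $h_1(x_2, x_3) = h_1(x_2 + a, x_3)$ for every $h_1 \in K\langle x_2, x_3\rangle$; taking $h_1 = x_2$ gives $a = 0$, so $f_2 = 0$ and $\varphi \in G_1$. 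Finally, with $f_2 = f_3 = 0$ the $x_1$-coordinate comparison for general $\psi$ collapses to $f_1(x_2 + h_2(x_3), x_3 + h_3) = f_1(x_2, x_3)$ for all $h_2 \in K\langle x_3\rangle$ and $h_3 \in K$, which is precisely the condition $f_1 \in S$.

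I expect the only genuinely subtle point to be this last step: one must be sure that commuting with all of $U_3$ really forces invariance of $f_1$ under the full $U_2$-action defining $S$, rather than under some proper subfamily of substitutions. This is handled by observing that $\varphi$ commutes in particular with the whole subgroup $\{(x_1, x_2 + g(x_3), x_3 + h)\mid g \in K\langle x_3\rangle,\ h \in K\} \cong U_2$, whose action is exactly the one appearing in the definition of $S$. Everything else is elementary bookkeeping with the substitution formula together with the ``arbitrary polynomial'' arguments already used in Lemmas~\ref{l:c}--\ref{l:co}.
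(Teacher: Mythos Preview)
Your proof is correct and follows essentially the same approach as the paper: write out the composition formula, then plug in well-chosen test automorphisms to force $f_2=f_3=0$ and conclude $f_1\in S$. The only cosmetic difference is the order of deductions: the paper works directly in the $x_1$-coordinate, taking $g_1=x_2$ (respectively $g_1=x_3$) with $g_2=g_3=0$ to obtain $f_2=0$ and $f_3=0$ in one stroke each, whereas you first use the $x_2$-coordinate (equivalently, the projection to $U_2$ and Lemma~\ref{l:c}) to get $f_3=0$ and $f_2=a\in K$, and then kill $a$ from the $x_1$-coordinate.
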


\begin{proof} The inclusion $\supseteq$ is evident.  Let
$$
\varphi = (x_1 + f_1(x_2, x_3), x_2 + f_2(x_3), x_3 + f_3)
$$
be some element in $Z(U_3)$ and
$$
\psi = (x_1 + g_1(x_2, x_3), x_2 + g_2(x_3), x_3 + g_3)
$$
be an arbitrary element of $U_3$. Since $\varphi \psi = \psi \varphi$ then we have the equalities\\

$x_1 + g_1(x_2, x_3) + f_1(x_2 + g_2(x_3), x_3 + g_3) =
x_1 + f_1(x_2, x_3) + g_1(x_2 + f_2(x_3), x_3 + f_3),$\\

$x_2 + g_2(x_3) + f_2(x_3 + g_3) = x_2 + f_2(x_3) + g_2(x_3 + f_3),$\\

$x_3 + g_3 + f_3 = x_3 + f_3 + g_3.$ \\

\noindent The third equality  holds for all $x_3$, $g_3$ and $f_3$. Rewrite the first and the second equality
in the form\\

$g_1(x_2, x_3) + f_1(x_2 + g_2(x_3), x_3 + g_3) = f_1(x_2, x_3) + g_1(x_2 + f_2(x_3), x_3 + f_3),$\\

$g_2(x_3) + f_2(x_3 + g_3) = f_2(x_3) + g_2(x_3 + f_3).$\\

\noindent Let $g_1 = x_2$, $g_2 = g_3 = 0$. Then
$$
x_2 + f_1(x_2, x_3) = f_1(x_2, x_3) + x_2 + f_2(x_3).
$$
Hence $f_2(x_3) = 0$.

Let $g_1 = x_3$, $g_2 = g_3 = 0$. Then
$$
x_3 + f_1(x_2, x_3) = f_1(x_2, x_3) + x_3 + f_3.
$$
Hence $f_3 = 0.$
We have only one condition
$$
f_1(x_2 + g_2(x_3), x_3 + g_3) = f_1(x_2, x_3),
$$
i.e. $f_1 \in S$.

\end{proof}


Let us  define the following subsets in the algebra $A_2 = K\langle x_2, x_3 \rangle$:\\

$S_1 = S$,\\

$S_{m+1} = \{ f \in A_2 ~|~f^{\varphi} - f \in S_m~ \mbox{for all}~ \varphi \in U_2 \}$, $m = 1, 2, \ldots$,\\

$S_{\omega} = \bigcup\limits_{m = 1}^{\infty} S_{m}$, \\

$S_{\omega+1} = \{ f \in A_2 ~|~f^{\varphi} - f \in S_\omega~ \mbox{for all}~ \varphi \in U_2 \}$,\\

$S_{\omega+m+1} = \{ f \in A_2 ~|~f^{\varphi} - f \in S_{\omega+m}~ \mbox{for all}~ \varphi \in U_2 \}$,
$m = 1, 2, \ldots$,\\

$S_{2\omega} = \bigcup\limits_{m = 1}^{\infty} S_{\omega+m}$, \\

$R_{m} = \{ f = f(x_3) \in K \langle x_3 \rangle ~|~\mathrm{deg} \, f \leq m \}$, $m = 0, 1, \ldots ,$\\

$R_{\omega} = \bigcup\limits_{m = 0}^{\infty} R_{m}$. \\
It  is not difficult to see that all $S_k$  are modules over $S$.

\medskip

\begin{remark} If we consider the homomorphism
$$
\pi : K \langle x_1, x_2, x_3 \rangle \longrightarrow K [x_1, x_2, x_3],
$$
then\\

$S^{\pi} = K$,\\

$S_{m+1}^{\pi} = \{ f \in K[x_3] ~|~\mathrm{deg} \, f \leq m \}$, $m = 1, 2, \ldots$,\\

$S_{\omega}^{\pi} = K[x_3]$, \\

$S_{\omega+m+1}^{\pi} = \{ f \in K[x_2, x_3] ~|~\mathrm{deg}_{x_2} \, f \leq m \}$,
$m = 0, 1, \ldots$,\\

$S_{2\omega}^{\pi} = K[x_2, x_3]$, \\

$R_{m}^{\pi} = \{ f \in K[x_3] ~|~\mathrm{deg} \, f \leq m \}$, $m = 1, 2, \ldots ,$\\

$R_{\omega}^{\pi} = K[x_3]$. \\

\end{remark}

\begin{theorem} The following equalities hold
\begin{equation}\label{eq:m}
Z_{m} = \{ (x_1 + f_1(x_2, x_3), x_2, x_3)~|~ f_1 \in S_m \},~~ m = 1, 2, \ldots,  2\omega,
\end{equation}
\begin{multline}\label{eq:2m}
Z_{2\omega+m} = \{ (x_1 + f_1(x_2, x_3), x_2 + f_2(x_3), x_3)~|~ f_1 \in k\langle x_2, x_3 \rangle,
f_2(x_3) \in R_m \},\\  m = 1, 2, \ldots,  \omega,
\end{multline}
\begin{equation}\label{eq:3m}
Z_{3\omega + 1} = U_3.
\end{equation}
\end{theorem}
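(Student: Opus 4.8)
The plan is to establish all three displayed identities by transfinite induction on the ordinal index (up to $3\omega+1$), working throughout with the semidirect decomposition $U_3=G_1\leftthreetimes U_2$: here $G_1=\{(x_1+f_1(x_2,x_3),x_2,x_3)\}$ is abelian and normal, the quotient $U_3/G_1$ is identified via $x_1\mapsto x_1$ with the copy of $U_2$ acting on $x_2,x_3$, and under this identification $G_1$ becomes the additive group $K\langle x_2,x_3\rangle$ equipped with the substitution action $f_1\mapsto f_1^{u}$ of $U_2$. The base case $m=1$ of the first identity is the already proven formula for $Z(U_3)$ together with $S_1=S$. Every limit step is automatic: since the hypercenter at a limit ordinal is the union of the preceding ones, the cases $m=\omega$ and $m=2\omega$ of the first identity and $m=\omega$ of the second follow at once from the definitions $S_\omega=\bigcup S_m$, $S_{2\omega}=\bigcup S_{\omega+m}$, $R_\omega=\bigcup R_m$. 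Finally $Z_{3\omega+1}=U_3$ is immediate once the second identity is known for $m=\omega$: then $Z_{3\omega}$ is exactly the set of automorphisms fixing $x_3$, so $U_3/Z_{3\omega}$ is isomorphic to the abelian group $(K,+)$ via $\varphi\mapsto f_3$, hence coincides with its own center, so $Z_{3\omega+1}=U_3$. Thus everything reduces to the successor steps.

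For a successor step at an ordinal $\alpha<2\omega$ I would argue as in the proofs of Lemma~\ref{l:c} and Lemma~\ref{l:co}. Assume $Z_\alpha$ is as asserted; in particular $Z_\alpha\subseteq G_1$, so its elements fix $x_2$ and $x_3$. Let $\varphi=(x_1+f_1,x_2+f_2,x_3+f_3)\in Z_{\alpha+1}$, so $[\varphi,\psi]\in Z_\alpha$ for all $\psi\in U_3$. Projecting into $U_3/G_1\cong U_2$ gives $[\bar\varphi,\bar\psi]=1$ for all $\bar\psi$, so by Lemma~\ref{l:c} the image $\bar\varphi$ is central in $U_2$; hence $f_3=0$ and $f_2=a$ is a constant in $K$. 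Testing next against $\psi=(x_1+g_1,x_2,x_3)\in G_1$, an extension to $U_3$ of the commutation formula of Lemma~\ref{l:form} gives $[\varphi,\psi]=(x_1+g_1(x_2,x_3)-g_1(x_2-a,x_3),x_2,x_3)$ (independently of $f_1$), so $g_1(x_2,x_3)-g_1(x_2-a,x_3)\in S_\alpha$ for every $g_1\in K\langle x_2,x_3\rangle$. But for $\alpha<2\omega$ the image $\pi(S_\alpha)$ has bounded degree in $x_2$ (by the Remark), whereas for $a\neq0$ the choice $g_1=x_2^{N}$ with $N$ large makes the $\pi$-image of this difference a polynomial of arbitrarily high degree in $x_2$; hence $a=0$ and $\varphi\in G_1$. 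With $\varphi\in G_1$ the requirement "$[\varphi,\psi]\in Z_\alpha$ for all $\psi$" reads, via the module structure above, $f_1^{u}-f_1\in S_\alpha$ for all $u\in U_2$, i.e.\ $f_1\in S_{\alpha+1}$ by definition; the reverse inclusion is clear. This gives the first identity for all $m\le 2\omega$.

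For the successor steps at and above $2\omega$ the same mechanism applies, but shifted one level down, and here the argument becomes delicate. Once the tower $S_\bullet$ has grown sufficiently, the constant $a$ above is no longer forced to vanish, so $f_2$ may be a genuine polynomial in $x_3$; which polynomials occur is then controlled exactly as in Lemma~\ref{l:co}, via the degree-dropping identity $\deg\!\big(f_2(x_3+c)-f_2(x_3)\big)\le\deg f_2-1$ together with the fact that $R_m$ is defined by a degree bound, yielding $f_2\in R_m$ at stage $2\omega+m$; one then checks that, $f_2$ being allowed to be nontrivial, the $x_1$-coordinate is unconstrained, so $f_1$ ranges over all of $K\langle x_2,x_3\rangle$. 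Passing to $m=\omega$ gives the second identity in full, and then $Z_{3\omega+1}=U_3$ as above.

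The main obstacle, and the real content beyond the bookkeeping with the commutator formulas, is the precise identification of the modules $S_m$, $S_\omega$, $S_{2\omega}$ and the verification that the iterated conditions defining them have exactly the claimed solution sets. The delicate point is the interaction between the polynomial part $\pi(f)\in K[x_2,x_3]$, whose growth is governed by the degree bounds recorded in the Remark, and the commutator part $f\in\ker\pi$, which carries the phenomena peculiar to the free associative algebra (the free generators $c_1,c_2,\dots$ of $S$): pinning down exactly when the coordinate $f_2$ is first allowed to become nontrivial, and that by that stage the admissible $f_1$ have filled up $K\langle x_2,x_3\rangle$, is what I expect to require the most care. Once these module identities are in hand, the transfinite induction above runs through mechanically.
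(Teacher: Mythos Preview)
Your overall architecture matches the paper's proof closely: transfinite induction on the ordinal, commutator identities to extract relations on the $f_i$, the projection $\pi$ to $K[x_1,x_2,x_3]$, and the test element $g_1=x_2^N$ to force $f_2=0$ at all stages below $2\omega$. Your use of the quotient $U_3/G_1\cong U_2$ together with Lemma~\ref{l:c} to obtain $f_3=0$ and $f_2\in K$ in one stroke is a clean repackaging of what the paper does by directly manipulating the two relations coming from $\varphi\psi=\psi\varphi\theta$; the substance is the same.

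Where you go astray is in your diagnosis of the ``main obstacle.'' You write that the real content is ``the precise identification of the modules $S_m$, $S_\omega$, $S_{2\omega}$ and the verification that the iterated conditions defining them have exactly the claimed solution sets.'' But the theorem makes no such claim: the $S_m$ are \emph{defined} by those iterated difference conditions, and equality~(\ref{eq:m}) is essentially the tautology that, once $f_2=f_3=0$ is known, membership of $(x_1+f_1,x_2,x_3)$ in $Z_{m+1}$ is literally the condition $f_1^{u}-f_1\in S_m$ for all $u\in U_2$, i.e.\ $f_1\in S_{m+1}$. No concrete description of $S_m$ is required or provided; that is exactly why the paper leaves Questions~1--4 open. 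So the work you anticipate in your final paragraph is not part of this proof at all.

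The one place where something beyond the tautology is needed is the passage from (\ref{eq:m}) to (\ref{eq:2m}), where the theorem asserts that $f_1$ becomes unconstrained. The paper handles this simply by observing, in the inductive step for $Z_{2\omega+m+1}$, that since $h_1$ ranges over all of $K\langle x_2,x_3\rangle$ the first relation imposes no condition, so only the second relation (on $f_2,f_3$) survives; the paper's proof of (\ref{eq:2m}) is then a verbatim rerun of Lemma~\ref{l:co}. You sensed correctly that this transition is the delicate point, but the paper's treatment of it is brief and does not involve any structural analysis of $S_{2\omega}$.
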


\begin{proof}  We use  induction on $m$. To prove (\ref{eq:m}) for $m+1$, we assume that for all $m$ such that
$1 \leq m < \omega$ equality (\ref{eq:m}) holds.
If
$$
\varphi = (x_1 + f_1(x_2, x_3), x_2 + f_2(x_3), x_3 + f_3) \in Z_{m+1}
$$
and
$$
\psi = (x_1 + g_1(x_2, x_3), x_2 + g_2(x_3), x_3 + g_3) \in U_{3},
$$
then for some
$$
\theta = (x_1 + h_1(x_2, x_3), x_2, x_3) \in Z_{m}
$$
holds $\varphi \, \psi = \psi \, \varphi \, \theta$. Acting on the generators $x_1$,  $x_2$,  $x_3$ by
$\varphi \, \psi$ and $\psi \, \varphi \, \theta$
we have
two relations
\begin{multline}\label{eq:1}
f_1(x_2 + g_2(x_3), x_3 + g_3) - f_1(x_2, x_3)  =  h_1(x_2, x_3) +  \\
+ g_1(x_2 + f_2(x_3), x_3 + f_3) - g_1(x_2, x_3),
\end{multline}
\begin{equation}\label{eq:2}
g_2(x_3) + f_2(x_2 + g_3) = f_2(x_3) + g_2(x_2 + f_3).
\end{equation}

If $g_2 = 0$, then the relation (\ref{eq:2}) has the form
$$
f_2(x_2 + g_3) = f_2(x_3).
$$
Since $g_3$ is an arbitrary element of $K$, then $f_2 \in K$. But in this case (\ref{eq:2}) has the form
$$
g_2(x_3 + f_3) = g_2(x_3).
$$
Since $g_2(x_3)$ is an arbitrary element of $K\langle x_3 \rangle$,  then $f_3 = 0$ and (\ref{eq:1}) has the
form\\
\begin{equation}\label{eq:3}
f_1(x_2 + g_2(x_3), x_3 + g_3) - f_1(x_2, x_3) = h_1(x_2, x_3) + g_1(x_2 + f_2, x_3) - g_1(x_2, x_3).
\end{equation}\\
Let $g_1 = x_2^{N}$ for some natural number $N$. Using the homomorphism
$$
\pi : K \langle x_1, x_2, x_3 \rangle \longrightarrow K[x_1, x_2, x_3]
$$
and the equality $\mathrm{deg}_{x_2}\, h_1^{\pi} = 0$ we see that if $f_2 \not= 0$, then
$$
\mathrm{deg}_{x_2}\, \left( f_1(x_2 + g_2(x_3), x_3 + g_3) - f_1(x_2, x_3) \right)^{\pi} = N - 1.
$$
Since $N$ is any non-negative integer,  then $f_2 = 0$ and
$$
f_1(x_2 + g_2(x_3), x_3 + g_3) - f_1(x_2, x_3) \in S_m,
$$
i.e. $f_1(x_2, x_3) \in S_{m+1}$ and we have proven the equality (\ref{eq:m}) for $m+1$:
$$
Z_{m+1} = \{ (x_1 + f_1(x_2, x_3), x_2, x_3)~|~ f_1 \in S_{m+1} \}.
$$

To prove (\ref{eq:m}) for $\omega+m+1$ assume that for all $\omega+m$ such that
$1 \leq m < \omega$ equality (\ref{eq:m}) holds.
If $\varphi \in Z_{\omega+m+1}$ and $\psi \in U_3$ then for some $\theta \in Z_{\omega+m}$ we have
$\varphi \, \psi = \psi\, \varphi \, \theta$ that give the relations (\ref{eq:1}) and (\ref{eq:2}).
As in the previous case we
can check that $f_2 \in K$, $f_3 = 0$ and (\ref{eq:1})--(\ref{eq:2}) are equivalent to (\ref{eq:3}).
Let $g_1 = x_2^{N}$ for some natural number $N$. Using the homomorphism
$$
\pi : K \langle x_1, x_2, x_3 \rangle \longrightarrow K [x_1, x_2, x_3]
$$
and the inequality $\mathrm{deg}_{x_2}\, h_1^{\pi} \leq m - 1$ we see that if $f_2 \not= 0$, then
$$
\mathrm{deg}_{x_2}\, \left( f_1(x_2 + g_2(x_3), x_3 + g_3) - f_1(x_2, x_3) \right)^{\pi} = N - 1
$$
for  $N \geq m + 1$. But the degree of the left hand side is bounded.
Hence $f_2 = 0$ and we have
$$
f_1(x_2 + g_2(x_3), x_3 + g_3) - f_1(x_2, x_3) \in S_{\omega+m},
$$
i.e.,  $f_1(x_2, x_3) \in S_{m+1}$ and  we have proven the equality (\ref{eq:m}) for $\omega+m+1$:
$$
Z_{\omega+m+1} = \{ (x_1 + f_1(x_2, x_3), x_2, x_3)~|~ f_1 \in S_{\omega+m+1} \}.
$$

To prove (\ref{eq:2m}) for $m+1$ assume that for all $m$ such that
$1 \leq m < \omega$ equality (\ref{eq:2m}) holds.
If $\varphi \in Z_{2\omega+m+1}$, $\psi \in U_3$, then for some $\theta \in Z_{2\omega+m}$ we have
$\varphi \, \psi = \psi\, \varphi \, \theta$. If
$$
\varphi = (x_1 + f_1(x_2, x_3), x_2 + f_2(x_3), x_3 + f_3),
$$
$$
\psi = (x_1 + g_1(x_2, x_3), x_2 + g_2(x_3), x_3 + g_3)
$$
and
$$
\theta = (x_1 + h_1(x_2, x_3), x_2 + h_2(x_3), x_3),
$$
then we have the relations

\begin{equation*}
\begin{split}
x_1 + g_1(x_2, x_3) + f_1(x_2 + g_2(x_3), x_3 + g_3) & = x_1 + h_1(x_2, x_3) + f_1(x_2 + h_2(x_3), x_3) +\\
& + g_1(x_2 + h_2(x_3) + f_2(x_3), x_3 + f_3),
\end{split}
\end{equation*}

$$
x_2 + g_2(x_3) + f_2(x_3 + g_3) = x_2 + h_2(x_3) + f_2(x_3)  + g_2(x_3 + f_3).
$$
Since $h_1$ is an arbitrary element of $K \langle x_2, x_3 \rangle$, then we must consider only the second
relation which is equal to
$$
f_2(x_3 + g_3) - f_2(x_3) = h_2(x_3) + g_2(x_3 + f_3) - g_2(x_3).
$$
Since $\mathrm{deg} \, h_2 \leq m$ and $g_2(x_3)$ is any element of $K \langle x_3 \rangle$, then $f_3 = 0$.
Hence,
$$
f_2(x_3 + g_3) - f_2(x_3) = h_2(x_3).
$$
From this equality follows that $\mathrm{deg} \, f_2 \leq m + 1$. We have proven  the equality (\ref{eq:2m}) for $m+1$:
$$
Z_{2\omega+m+1} = \{ (x_1 + f_1(x_2, x_3), x_2 + f_2(x_3), x_3)~|~ f_1 \in K \langle x_2, x_3 \rangle,
f_2(x_3) \in R_{m+1} \}.
$$

To prove (\ref{eq:2m}) we note that
$$
[U_3, U_3] \subseteq \{ (x_1 + f_1(x_2, x_3), x_2 + f_2(x_3), x_3) \}.
$$

\end{proof}

We described  the hypercenters of $U_3$ in the terms of the algebras $S_m$ and  $S_{\omega+m}$. It is
interesting to find sets of generators for these algebras.
To do it we must give answers on the following questions.

{\bf Question 1} (see Hypothesis 1). Is it true that
$$
S = K\langle c_1, c_2, \ldots \rangle?
$$

\medskip

{\bf Question 2.} Is it true that for all $m \geq 1$ the following equalities are true
$$
S_{m+1} = \{ f \in K\langle S, x_3 \rangle ~|~\mathrm{deg}_{x_3} \, f \leq m \}?
$$

\medskip

{\bf Question 3.} Is it true that
$$
\bigcup_{m=1}^{\infty} S_m = K\langle S, x_3 \rangle?
$$

\medskip

{\bf Question 4.} Is it true that for all $m \geq 1$ the following equalities are true
$$
S_{\omega+m} = \{ f \in K \langle S, x_3, x_2 \rangle ~|~\mathrm{deg}_{x_2} \, f \leq m \}?
$$

\medskip

If $R$ is the Specht algebra of $A_2$, i.e. the subalgebra of $A_2$ that is generated by all commutators
$$
[x_2, x_3],~~[[x_2, x_3], x_3],~~[[x_2, x_3], x_2], \ldots
$$
then the following inclusions hold

\begin{equation}\label{eq:4}
S_{m+1} \subset \{ f \in K \langle R, x_3 \rangle ~|~\mathrm{deg}_{x_3} \, f \leq m \},
\end{equation}

\begin{equation}\label{eq:5}
S_{\omega+m} \subset \{ f \in K \langle R, x_3, x_2 \rangle ~|~\mathrm{deg}_{x_2} \, f \leq m \},
\end{equation}
for all $m \geq 1$. It  follows from the fact that $K \langle x_2, x_3 \rangle$ is a free left $R$-module
 with the set of free generators
$$
x_2^{\alpha} x_3^{\beta}, ~~~\alpha, \beta \geq 0.
$$
Note that the inclusion (\ref{eq:4})  is strict. It follows from

\begin{proposition}
The commutators
$$
[x_2, \underbrace{x_3, \ldots, x_3}_k, x_2] = [c_k, x_2],~~k \geq 1,
$$
do not lie in $S_{m}$, $m \geq 1$.
\end{proposition}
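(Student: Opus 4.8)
The plan is to reduce the Proposition to a sharper statement about the commutators $[c_k,x_3^N]$ and then apply the $U_2$-action once to $[c_k,x_2]$. All the basic computations rest on the fact, established in the previous Lemma, that $c_k\in S$, so that $c_k^{\varphi}=c_k$ for every $\varphi\in U_2$. Writing $\varphi=(x_2+g(x_3),\,x_3+h)$ one gets immediately $[c_k,x_2]^{\varphi}-[c_k,x_2]=[c_k,g(x_3)]$ and, for any $q=q(x_3)\in K\langle x_3\rangle$, $[c_k,q(x_3)]^{\varphi}-[c_k,q(x_3)]=[c_k,\,q(x_3+h)-q(x_3)]$; note that the right-hand sides depend only on $g$, resp.\ only on $h$. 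Since $\mathrm{char}\,K=0$, the operator $q(x_3)\mapsto q(x_3+h)-q(x_3)$ strictly lowers $\mathrm{deg}_{x_3}$ whenever $h\ne0$.

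The first step is a degree bound: $[c_k,q(x_3)]\in S_{\max(\mathrm{deg}_{x_3}q,\,1)}$ for all $q\in K\langle x_3\rangle$. I would prove this by induction on $d=\mathrm{deg}_{x_3}q$: when $d\le1$ the commutator is a scalar multiple of $c_{k+1}=[c_k,x_3]\in S=S_1$, and when $d\ge2$ the second identity above shows the $\varphi$-increment of $[c_k,q(x_3)]$ lies in $S_{d-1}$, so $[c_k,q(x_3)]\in S_d$ by the definition of the $S_i$.

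The second, and crucial, step is the matching lower bound $[c_k,x_3^N]\notin S_{N-1}$ for all $N\ge2$, again by induction on $N$. For $N=2$ one computes $[c_k,x_3^2]=c_{k+1}x_3+x_3c_{k+1}$, and applying $\varphi=(x_2,\,x_3+h)$ changes it by $2h\,c_{k+1}$, which is nonzero for $h\ne0$ because $c_{k+1}$ is one of the free generators of $K\langle c_1,c_2,\dots\rangle$; hence $[c_k,x_3^2]\notin S_1$. For $N\ge3$ one expands $(x_3+h)^N-x_3^N=Nh\,x_3^{N-1}+(\text{a polynomial of degree}<N-1)$: by the degree bound of the first step the low-degree remainder contributes an element of $S_{N-2}$ to $[c_k,\,(x_3+h)^N-x_3^N]$, while $Nh\,[c_k,x_3^{N-1}]\notin S_{N-2}$ by the inductive hypothesis (for $h\ne0$), so the $\varphi$-increment of $[c_k,x_3^N]$ is not in $S_{N-2}$, and therefore $[c_k,x_3^N]\notin S_{N-1}$.

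To conclude, suppose $[c_k,x_2]\in S_m$ for some $m\ge1$. If $m=1$ this says $[c_k,x_2]$ is $U_2$-invariant, contradicting $[c_k,x_2]^{\varphi}-[c_k,x_2]=[c_k,g(x_3)]$ with $g=x_3$, whose right side is $c_{k+1}\ne0$. If $m\ge2$, then the same identity, ranged over all $g\in K\langle x_3\rangle$, forces $[c_k,g(x_3)]\in S_{m-1}$ for every $g$; taking $g=x_3^{m}$ gives $[c_k,x_3^{m}]\in S_{m-1}$, contradicting the second step. Hence $[c_k,x_2]\notin S_m$ for all $m\ge1$, which is the assertion. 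I expect the only real obstacle to be the second step: one must be sure that no cancellation occurs when powers of $x_3$ are peeled off, so that the failure of membership in $S_{N-1}$ is genuine, and this is exactly where the degree bound of the first step and the freeness of the generators $c_i$ (so that $c_{k+1}\ne0$) are needed; the remaining ingredients are the two conjugation identities and bookkeeping with $\mathrm{deg}_{x_3}$.
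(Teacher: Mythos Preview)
Your argument is correct and follows essentially the same route as the paper: both reduce to showing $[c_k,x_3^{N}]\notin S_{N-1}$ by iterating the translation $x_3\mapsto x_3+h$ and tracking the one-step drop in $x_3$-degree until a nonzero multiple of $c_{k+1}$ survives. The paper first writes $[c_k,x_3^{N}]=\sum_{p+q=N-1}x_3^{p}\,c_{k+1}\,x_3^{q}$ explicitly and then iterates the increment computation (leaving the final non-membership to ``similar ideas''), whereas you package the same descent as a clean induction together with a separate upper-bound lemma; the underlying idea is identical.
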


\begin{proof}
Indeed, for the automorphism
$$
\varphi = (x_2 + g_2(x_3), x_3 + g_3)
$$
of the algebra $K \langle x_2, x_3 \rangle$
we have
$$
[c_k, x_2]^{\varphi} = [c_k^{\varphi}, x_2^{\varphi}] = [c_k, x_2 + g_2(x_3)] = [c_k, x_2] + [c_k, g_2(x_3)].
$$
If $g_2(x_3) = x_3^{N}$, then\\
\begin{align*}
[c_k, x_3^{N}] & = c_k \, x_3^{N} -  x_3^{N} c_k = (c_k x_3 - x_3 c_k) x_3^{N-1} + x_3 c_k x_3^{N-1} -
x_3^{N} c_k \\
 & = c_{k+1} x_3^{N-1} + x_3 (c_k x_3^{N-1} - x_3^{N-1} c_k) \\
 & =c_{k+1} x_3^{N-1} +
x_3 \left(( c_k x_3 - x_3 c_k) x_3^{N-2} + x_3 c_k x_3^{N-2} - x_3^{N-1} c_k \right)\\
 & =c_{k+1} x_3^{N-1} + x_3 c_{k+1} x_3^{N-2} + x_3^{2} ( c_k x_3^{N-2} - x_3^{N-2} c_k )\\
 & = \ldots \\
 & = \sum_{p+q = N-1} x_3^p c_{k+1} x_3^q.\\
\end{align*}
Hence, for $\varphi = (x_2 + g_2(x_3), x_3 + g_3)$ we have
$$
[c_k, x_2]^{\varphi} - [c_k, x_2] = \sum_{p+q = N-1} x_3^p c_{k+1} x_3^q.
$$
If
$$
g_2(x_3) = \sum_{n=0}^{N} a_n x_3^n,
$$
then
$$
[c_k, x_2]^{\varphi} - [c_k, x_2] = \sum_{n=1}^{N-1} a_n \sum_{p+q=n-1} x_3^p c_{k+1} x_3^q.
$$
If $[c_k, x_2] \in S_m$ for some $m$, then
$$
[[c_k, x_2], \varphi ] \equiv [c_k, x_2]^{\varphi} - [c_k, x_2] \in S_{m-1}.
$$
Let
$$
\psi = (x_2 + h_2(x_3), x_3 + h),~~~\varphi = (x_2 + x_3^N, x_3).
$$
Then
\begin{align*}
[[[c_k, x_2], \varphi ], \psi ] & = \left[\sum_{p+q=N-1} x_3^p c_{k+1} x_3^q, \psi \right] \\
 & = \sum_{p+q=N-1} (x_3 + h)^p c_{k+1} (x_3 + h)^q - \sum_{p+q=N-1} x_3^p c_{k+1} x_3^q \\
 & = \sum_{p+q=N-1} \left( \sum_{l=0}^{p-1} C_p^l x_3^l h^{p-l} \right) c_{k+1}
 \left( \sum_{r=0}^{q} C_{q-1}^r x_3^r h^{q-r} \right)\\
\end{align*}
has the degree $N-2$ on the variable $x_3$. Continuing this process  we are getting that
if $\mathrm{deg} \, g_2(x_3) = N$, then
$$
[c_k, x_2, \varphi, \psi_1, \ldots, \psi_{N-1}] \in S
$$
for every  $\psi_1, \ldots, \psi_{N-1} \in U_2$. Hence,\\

$[c_k, x_2, \varphi, \psi_1, \ldots, \psi_{N-1}] \in S,$\\

$...............................$ \\

$[[c_k, x_2, \varphi ] \in S_N,$\\

$[c_k, x_2] \in S_{N+1}.$\\

Using the similar ideas we can prove that
$$
[c_k, x_2] \not\in S_{N}.
$$

Since we can take arbitrary number  $N > m$, then
$$
[c_k, x_2] \not\in S_{m}, ~~m = 1, 2, \ldots
$$
\end{proof}

\section{Center of the unitriangular group $U_n$, $n \geq 4$}

In this section we prove the following assertion

\begin{theorem}  Any automorphism $\varphi$ in the center $Z(U_n)$  of $U_n$ has a form
$$
 \varphi=
 \left( x_1 + f(x_2, \ldots, x_n), x_2, \ldots, x_n  \right),
$$
where the polynomial $f$ is such that
$$
f(x_2 + g_2, \ldots, x_n + g_n) = f(x_2, \ldots, x_n)
$$
for every
$ g_2 \in K \langle x_3, \ldots, x_n \rangle$, $g_3\in K \langle x_4, \ldots, x_n \rangle, \ldots, g_n \in K$.

\end{theorem}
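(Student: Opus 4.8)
The plan is to mimic the derivation of $Z(U_3)$ given above: write a prospective central element as
$$\varphi = (x_1 + f_1,\ x_2 + f_2,\ \ldots,\ x_n + f_n),\qquad f_i = f_i(x_{i+1},\ldots,x_n),\ \ f_n\in K,$$
and extract the restrictions on the $f_i$ by comparing the images of the generators under $\varphi\psi$ and $\psi\varphi$ for suitably chosen $\psi\in U_n$.

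The first step would be to kill $f_2,\ldots,f_n$. Fix $j\in\{2,\ldots,n\}$ and take the elementary automorphism $\sigma(1,1,x_j)\in U_n$ (legitimate since $x_j\in K\langle x_2,\ldots,x_n\rangle$), which sends $x_1\mapsto x_1+x_j$ and fixes all other variables. Writing out $\varphi\,\sigma(1,1,x_j)=\sigma(1,1,x_j)\,\varphi$ and comparing the images of $x_1$, everything cancels except the relation $f_j=0$ (the equalities on $x_2,\ldots,x_n$ impose nothing, as $\sigma(1,1,x_j)$ fixes those variables and $\varphi$ moves $x_1$ only). Letting $j$ run over $2,\ldots,n$ gives $\varphi=(x_1+f_1(x_2,\ldots,x_n),\ x_2,\ldots,x_n)$.

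The second step would be to commute this reduced $\varphi$ with a general $\psi=(x_1+g_1,\ldots,x_n+g_n)\in U_n$, where $g_i=g_i(x_{i+1},\ldots,x_n)$. Because $\varphi$ now fixes $x_2,\ldots,x_n$ and each $g_i$ with $i\geq 2$ involves only those variables, the $x_2,\ldots,x_n$ components of $\varphi\psi=\psi\varphi$ hold automatically, and the $x_1$ component reads
$$g_1 + f_1(x_2+g_2,\ldots,x_n+g_n) = f_1(x_2,\ldots,x_n) + g_1,$$
i.e. $f_1(x_2+g_2,\ldots,x_n+g_n)=f_1(x_2,\ldots,x_n)$. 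Since $g_2$ ranges over all of $K\langle x_3,\ldots,x_n\rangle$, $g_3$ over $K\langle x_4,\ldots,x_n\rangle$, \ldots, and $g_n$ over $K$, this is exactly the invariance claimed for $f=f_1$. (The reverse implication is the same computation read backwards, so in fact $Z(U_n)$ equals this set; the argument is uniform in $n\geq 2$ and specializes to Lemma~\ref{l:c} and to the description of $Z(U_3)$.)

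I do not expect a genuine obstacle here — it is a short computation. The one place needing a little care is the choice of test automorphisms: taking $\sigma(1,1,x_j)$, rather than anything more elaborate, is what makes $f_2,\ldots,f_n$ vanish one line at a time, and once they vanish the commutation relations attached to the generators $x_2,\ldots,x_n$ become vacuous because $\varphi$ acts trivially on those variables, leaving only the $x_1$-equation to analyse.
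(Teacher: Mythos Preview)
Your proposal is correct and follows essentially the same route as the paper: the paper also tests $\varphi$ against the elementary automorphisms with $g_1=x_j$ and $g_2=\cdots=g_n=0$ (your $\sigma(1,1,x_j)$) to force $f_j=0$ for $j\ge 2$, and then reads off the invariance condition on $f_1$ from the $x_1$-component of $\varphi\psi=\psi\varphi$ for general $\psi$. The only cosmetic difference is that you name the test automorphisms as $\sigma(1,1,x_j)$ while the paper specializes the coordinates of a generic $\psi$.
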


We will assume that  $U_{n-1}$ includes in $U_n$ by the rule
$$
U_{n-1} = \left\{ \varphi =  \left( x_1, x_2 + g_2, \ldots, x_n + g_n  \right) \in U_n ~|~
 g_2 \in K \langle x_3, \ldots, x_n \rangle,\,\, \ldots,  g_n \in K
\right\}.
$$
Hence we have the following sequence of inclusions for the subgroups $U_k$, $k=3,\ldots,n$
$$
U_n \geq U_{n-1} \geq \ldots  \geq U_{3}.
$$

In this assumption  we can formulate Theorem by the following manner
$$
Z(U_n) = \left\{
\varphi= \left( x_1 + f(x_2, \ldots, x_n), x_2, \ldots, x_n  \right) ~|~
f^{U_{n-1}}=f
\right\},
$$
where
$$
f^{U_{n-1}} = \{ f^{\psi} ~|~ \psi \in U_{n-1}   \}.
$$

\begin{proof}
Let
$$
\varphi= \left( x_1 + f_1, x_2 + f_2, \ldots, x_n + f_n  \right) \in Z(U_n)
$$
and
$$
\psi= \left( x_1 + g_1, x_2 + g_2, \ldots, x_n + g_n  \right)
$$
be an arbitrary element of  $U_n$.
Then $x_k^{\varphi\psi} = x_k^{\psi\varphi}$  for all $k = 1, 2, \ldots, n$.
In particular, if $k = 1$, then

\begin{equation}\label{eq:11}
( x_1 + f_1)^\psi = ( x_1 + g_1)^\varphi.
\end{equation}

Put $g_1 = x_2$, $g_2 = g_3 = \ldots = 0$.
Then this relation has the form
$$
 x_1 + x_2 + f_1 = x_1 + f_1 + x_2 + f_2.
$$
Hence, $f_2 = 0$.

Analogously, putting $g_1 = x_3$, $g_2 = g_3 = \ldots = 0$, we get $f_3 = 0$.
Hence, $f_2 = f_3 = \ldots = f_n = 0$.

The relation (\ref{eq:11}) for arbitrary $\psi$ has the form
$$
x + g_1(x_2, \ldots, x_n) + f_1( x_2 + g_2, \ldots,  x_n + g_n) = x + f_1( x_2, \ldots, x_n)
+ g_1(x_2, \ldots, x_n).
$$
Hence,
$$
f_1( x_2+g_2, \ldots, x_n + g_n) = f_1( x_2, \ldots, x_n).
$$

\end{proof}

\vspace{0.5cm}

Let us define the notations
$$
\zeta U_n = \left\{ f(x_2, \ldots, x_n) \in K \langle x_2, \ldots, x_n \rangle  ~|~ f^{U_{n-1}} = f \right\},
$$
$$
\zeta U_{n-1} = \left\{ f(x_3, \ldots, x_n) \in K \langle x_3, \ldots, x_n \rangle  ~|~ f^{U_{n-2}} = f \right\},
$$
$$
......................................................................................
$$
$$
\zeta U_{3} = \left\{ f(x_{n-2}, x_{n-1}, x_n) \in K \langle x_{n-2},x_{n-1},x_n \rangle  ~|~
f^{U_{2}} = f \right\}.
$$
Note that $\zeta U_{3} = S$.

We formulate the next hypothesis on the structure of the algebras $\zeta U_k$, $k=3, \ldots, n$.

\vspace{0.5cm}

{\bf Hypothesis  2.} The following inclusions hold
$$
\zeta U_4 \subseteq K \langle \zeta U_3, x_{n-2} \rangle,
$$
$$
\zeta U_5\subseteq K \langle \zeta U_4, x_{n-3} \rangle,
$$
$$
.................................,
$$
$$
\zeta U_n\subseteq K \langle \zeta U_{n-1}, x_{2} \rangle.
$$

Recall that by Hypothesis 1 we have
$$
\zeta U_3 = K \langle c_1, c_2, \ldots  \rangle,
$$
where $c_1 = [x_{n-1}, x_n]$, $c_{k+1} = [c_k, x_n]$, $k = 1, 2, \ldots $

\begin{proposition}
If Hypotheses 1 and 2 are true,  then  the following equality holds
$$
\zeta U_k = K \langle c_1, c_2, \ldots  \rangle,\,\,\,k = 3, 4, \ldots, n.
$$
\end{proposition}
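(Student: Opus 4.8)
The plan is to argue by induction on $k$. The base case $k=3$ is precisely Hypothesis 1, in the form $\zeta U_3=K\langle c_1,c_2,\ldots\rangle$ recalled just before the Proposition. So fix $k$ with $4\le k\le n$, assume $\zeta U_{k-1}=K\langle c_1,c_2,\ldots\rangle$, and prove both inclusions for $\zeta U_k$.

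For $K\langle c_1,c_2,\ldots\rangle\subseteq\zeta U_k$ I would check directly that every $c_i$ is fixed by the subgroup $U_{k-1}$ of $U_k$; since $\zeta U_k$ is a subalgebra of $A_n$, this suffices. It is enough to test the generators $\sigma(j,1,h)$ of $U_{k-1}$. If $j\notin\{n-1,n\}$, then $\sigma(j,1,h)$ fixes $x_{n-1}$ and $x_n$, hence fixes each $c_i\in K\langle x_{n-1},x_n\rangle$. If $j=n-1$, then $x_{n-1}\mapsto x_{n-1}+h$ with $h\in K\langle x_n\rangle$ and $x_n\mapsto x_n$, so $[x_{n-1}+h,x_n]=[x_{n-1},x_n]$, and induction on $i$ (using $c_{i+1}=[c_i,x_n]$) shows all $c_i$ are fixed. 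If $j=n$, the argument is the same because then $h\in K$ is central.

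For the reverse inclusion, take $f\in\zeta U_k$. By Hypothesis 2 and the inductive hypothesis, $f\in K\langle\zeta U_{k-1},x_{n-k+2}\rangle=K\langle c_1,c_2,\ldots,x_{n-k+2}\rangle$. Write $t=n-k+2$; since $k\ge 4$ we have $t\le n-2$, so $x_t$ is distinct from $x_{n-1}$ and $x_n$ and therefore does not occur in any $c_i$. Choose $M$ so large that $f=P(c_1,\ldots,c_{M-1},x_t)$ for some noncommutative polynomial $P$. The automorphism $\sigma(t,1,c_M)$ belongs to $U_{k-1}\le U_k$ (legitimately, as $c_M\in K\langle x_{n-1},x_n\rangle\subseteq K\langle x_{t+1},\ldots,x_n\rangle$); it fixes $c_1,\ldots,c_{M-1}$ and sends $x_t\mapsto x_t+c_M$, so $f^{\sigma(t,1,c_M)}=f$ reads
$$
P(c_1,\ldots,c_{M-1},x_t+c_M)=P(c_1,\ldots,c_{M-1},x_t).
$$
Both sides lie in the subalgebra $K\langle c_1,\ldots,c_M,x_t\rangle$ of $A_n$, which is free on these generators — the $c_i$ are free generators of $K\langle c_1,c_2,\ldots\rangle$, and adjoining a variable not occurring among them again gives a free generating set. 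Grade this free algebra by the number of occurrences of $c_M$ and compare the parts of $c_M$-degree one: the equation says that the element obtained from $P$ by replacing one occurrence of $x_t$ at a time by $c_M$ and summing over all occurrences (and all monomials of $P$) is zero. The monomials produced this way are pairwise distinct, since the position of the single $c_M$ in such a monomial recovers both the monomial of $P$ it came from and which occurrence of $x_t$ was replaced; hence every monomial of $P$ that contains $x_t$ has coefficient $0$. Thus $f=P\in K\langle c_1,\ldots,c_{M-1}\rangle\subseteq K\langle c_1,c_2,\ldots\rangle$, and the induction is complete.

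I expect the step requiring the most care to be the last one — eliminating the extra variable $x_t$. The trick that makes it smooth is to conjugate by $\sigma(t,1,c_M)$ with $c_M$ of index exceeding everything occurring in $f$, so that the substitution $x_t\mapsto x_t+c_M$ takes place inside a free algebra in which $c_M$ is a genuine new generator; this also avoids having to treat the case $k=4$ (where the ``obvious'' candidate $x_{t+1}=x_{n-1}$ is already entangled with the $c_i$) separately. The only other point that needs care is the freeness of the subalgebras $K\langle c_1,\ldots,c_M,x_t\rangle$, which is what licenses the grading by $c_M$-degree and the monomial comparison.
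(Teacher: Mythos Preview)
Your proof is correct and follows essentially the same route as the paper: both arguments use the inductive hypothesis and Hypothesis~2 to write $f$ as a polynomial in finitely many $c_i$'s together with the extra variable $x_t$, then substitute a fresh commutator $c_M$ (with $M$ larger than any index appearing) via the automorphism $x_t\mapsto x_t+c_M$ and exploit the freeness of the $c_i$'s to force $f$ to be independent of $x_t$. The only cosmetic difference is in the last step: the paper specializes $x_t\mapsto 0$ to obtain $F(c_{N+1},c_1,\ldots,c_N)=F(0,c_1,\ldots,c_N)$ and reads off the conclusion directly from freeness, whereas you keep $x_t$ and compare $c_M$-degree~$1$ components inside the free algebra $K\langle c_1,\ldots,c_M,x_t\rangle$.
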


\begin{proof}
For $k = 4$ Hypothesis 2 has the form
$$
\zeta U_4 \subseteq K \langle \zeta U_3, x_{n-2} \rangle,
$$
i.e.,  every polynomial  $f\in \zeta U_4$
can be represented in the form
$$
f = F(x_{n-2}, c_1, c_2, \ldots, c_N)
$$
for some non-negative integer $N$.
Applying the automorphism
$$
\psi = \left( x_1, x_2, \ldots,  x_{n-2} + g_{n-2} , x_{n-1}, x_n \right),
$$
we get
$$
F(x_{n-2} + g_{n-2}, c_1, c_2, \ldots, c_N) = F(x_{n-2}, c_1, c_2, \ldots, c_N).
$$
Here $g_{n-2} = g_{n-2}(x_{n-1}, x_n)$ is an arbitrary element of $K \langle x_{n-1}, x_n \rangle$.
Putting in this equality $g_{n-2} = c_{N+1}$ and $x_{n-2} = 0$ we have
$$
F(c_{N+1}, c_1, c_2, \ldots, c_N) = F(0, c_1, c_2, \ldots, c_N).
$$
Since $c_1, c_2, \ldots $ are free generators,
 $F$ does not contain the variable  $x_{n-2}$. Hence
$$
\zeta U_4 = K \langle c_1, c_2, \ldots \rangle.
$$
Analogously, we can prove the equality
$$
\zeta U_k = K \langle c_1, c_2, \ldots \rangle,\,\,\,k=3, 4, \ldots, n.
$$
\end{proof}

We see that the description of the hypercenters of $U_n$ (see Hypotheses 1 and 2) is connected with the
theory of non-commutative invariants in free associative algebra under the action of some subgroups of $U_n$.
We will study these invariants in next papers.

\vspace{0.8cm}

\vspace{1cm}

\bibliographystyle{amsalpha}

\end{document}